\theoremstyle{plain}
\newtheorem{theorem}{Theorem}[section]
\newtheorem{cor}[theorem]{Corollary}
\newtheorem{lem}[theorem]{Lemma}
\newtheorem{rem}[theorem]{Remark}
\newcommand{\hym}{hyperbolic metric} 
\newcommand{\WPm}{Weil-Petersson metric} 
\newcommand{\WP}{Weil-Petersson} 
\newcommand{\TS}{Teichm\"{u}ller space}
\newcommand{\Tm}{Teichm\"{u}ller metric}
\newcommand{\Td}{Teichm\"{u}ller distance}
\newcommand{\Tt}{Teichm\"{u}ller theory}
\newcommand{\kg}{Kleinian group}
\newcommand{\cmcs}{constant mean curvature surface}
\newcommand{\mc}{mean curvature}
\newcommand{\pc}{principal curvature}
\newcommand{\cbs}{conformal boundaries}
\newcommand{\cb}{conformal boundary}
\newcommand{\cs}{conformal structure}
\newcommand{\hqd}{holomorphic quadratic differential}
\newcommand{\RS}{Riemann surface}
\newcommand{\sff}{second fundamental form}
\newcommand{\af}{almost Fuchsian}
\newcommand{\qf}{quasi-Fuchsian}
\newcommand{\cc}{convex core}
\newcommand{\tm}{3-manifold}
\newcommand{\ms}{minimal surface}
\newcommand{\ps}{parallel surface}
\newcommand{\ef}{equidistant foliation}
\newcommand{\ppl}[2]{\frac{\partial{#1}}{\partial{#2}}}
\newcommand{\C}{\mathbb{C}}
\newcommand{\R}{\mathbb{R}}
\newcommand{\I}{\mathbb{I}}
\renewcommand{\H}{\mathbb{H}}
\newcommand{\Tcal}{\mathcal{T}}
\newcommand{\Fcal}{\mathcal{F}}
\numberwithin{equation}{section}
\def\@citestyle{\m@th\upshape\mdseries}
\def\citeform#1{{\bfseries#1}}
\def\@cite#1#2{{%
  \@citestyle[\citeform{#1}\if@tempswa, #2\fi]}}
\let\csname cite \endcsname\cite
  \edef\cite{\@nx\protect\@xp\@nx\csname cite \endcsname}%
\begin{document}

\title{Quasi-Fuchsian 3-Manifolds and Metrics on Teichm\"{u}ller Space}
\author{Ren Guo}
\address{School of Mathematics, University of Minnesota, Minneappolis, MN 55455, USA}
\email{guoxx170@math.umn.edu}

\author{Zheng Huang}
\address{Department of Mathematics, The City University of New York, Staten Island, NY 10314, USA}
\email{zheng.huang@csi.cuny.edu}

\author{Biao Wang}
\address{Department of Mathematics, University of Toledo, Toledo, OH 43606, USA.}
\email{biao.wang@utoledo.edu}

\subjclass[2000]{Primary 30F60, 32G15; Secondary 53C42, 57M50}

\keywords{quasi-Fuchsian manifolds, foliation, minimal surfaces, Teichm\"uller distance, {\WPm}}

\begin{abstract}
An {\af} {\tm} is a {\qf} manifold which contains an incompressible closed {\ms} with {\pc}s in the range of $(-1,1)$. By the work of Uhlenbeck, 
such a {\tm} $M$ admits a foliation of {\ps}s, whose locus in {\TS} is represented as a path $\gamma$, we show that $\gamma$ 
joins the {\cs}s of the two components of the conformal boundary of $M$. Moreover, we obtain an upper bound for the {\Td} between 
any two points on $\gamma$, in particular, the {\Td} between the two components of the {\cb} of $M$, in terms of the {\pc}s of the {\ms} 
in $M$. We also establish a new potential for the {\WP} metric on {\TS}.
\end{abstract}
\maketitle
\section {Introduction of Main Results}

One of the fundamental questions in hyperbolic geometry of two and three dimensions is the interaction between the internal geometry 
of a hyperbolic {\tm} and the geometry of {\TS} of {\RS}s. It is natural to consider the situation for complete {\qf} {\tm}s. Let $M$ be such 
a manifold, then $M$ is topologically identified as $M = \Sigma \times \R$, where $\Sigma$ is a closed surface of genus $g \ge 2$. We 
denote the {\TS} of genus $g$ surfaces by $\Tcal_g(\Sigma)$, the space of {\cs}s on $\Sigma$ modulo orientation preserving 
diffeomorphisms in the homotopy class of the identity map. An important theorem of Brock (\cite{Br03}), proving a conjecture of Thurston, 
states that the hyperbolic volume of the {\cc} of a {\qf} {\tm} is quasi-isometric to the {\WP} distance between the two components of the 
{\cb} of the {\tm} in {\TS}.

The space of {\qf} {\tm}s $QF(\Sigma)$, called {\it the {\qf} space}, is a complex manifold of complex dimension $6g-6$. Its geometrical 
structures are extremely complicated, and they have been subjects of intensive study in recent years. In this paper, we consider mostly a 
subspace formed by the so-called {\it {\af} {\tm}s}: a {\qf} {\tm} $M$ is {\af} if it contains a unique embedded {\ms}, representing the 
fundamental group, whose {\pc}s are in the range of $(-1,1)$. This subspace is an open connected manifold of the 
same dimension (\cite {U83}). One can view the {\qf} space as a ``higher" {\TS}, a square with {\TS} sitting inside as a diagonal, and view the 
space of {\af} {\tm}s as an open subspace of $QF(\Sigma)$ around this diagonal. See also (\cite{KS07}) for generalization of almost Fuchsian 
manifolds to dS, AdS geometry and surfaces with cone points.

By a remarkable theorem of Uhlenbeck (\cite{U83}), any {\af} 3-manifold admits a foliation of parallel surfaces from the unique {\ms}. These 
{\ps}s, denoted by $S(r)$ for $r \in \R$, can be viewed as level sets of distant $r$ from the {\ms} (see \S 2.2 for details). Special hypersurfaces 
such as {\cmcs}s are investigated in \cite {Wang08} and \cite{HW09} and others. The existence of 
a foliation of submanifolds is an important and powerful tool in the study of geometry. A foliation of {\ps}s for an {\af} {\tm} allows one to 
relate the deformation of these structures to {\Tt}, the deformation theory of {\cs}s on a closed surface. We consider a foliation 
$\Fcal = \{S(r)\}_{r \in \R}$ of {\ps}s (or the {\it normal flow}) from the {\ms} $S$ for an {\af} {\tm} $M$ and we are interested in following 
the locus of this foliation in {\TS}: $\{S(r)\}_{r \in \R}$ of $M$ forms a path $\gamma(M)$ in ${\Tcal}_g(\Sigma)$. 
\begin{theorem}\label{path}
The path $\gamma(M)$ in ${\Tcal}_g(\Sigma)$ joins the {\cs}s of the two components of the {\cb} of $M$.
\end{theorem}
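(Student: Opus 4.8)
The plan is to realise $\gamma(M)$ as an explicit path of {\cs}s produced by the normal flow, and then to analyse its two ends separately. Let $S\subset M$ be the unique {\ms}, with induced metric $g$, unit normal $\n$ and shape operator $B$; minimality forces the {\pc}s to be $\pm\lambda$ with $|\lambda|<1$, so $B^2=\lambda^2 I$ pointwise. Lifting to $\H^3$ and integrating the Riccati equation for the normal exponential flow in constant curvature $-1$, the induced metric $g_r$ on the {\ps} $S(r)$ is
\begin{equation}\label{gr}
g_r=g\big((\cosh r\,I+\sinh r\,B)\,\cdot\,,\,(\cosh r\,I+\sinh r\,B)\,\cdot\,\big)=(\cosh^2 r+\lambda^2\sinh^2 r)\,g+\sinh(2r)\,g(B\,\cdot\,,\,\cdot\,).
\end{equation}
Since $|\lambda|<1$ the factors $\cosh r\pm\lambda\sinh r$ are positive for every $r$, so each $S(r)$ is embedded and $r\mapsto[g_r]$ is a smooth path $\gamma(M)$ in $\Tcal_g(\Sigma)$.

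Next I would compute the {\cd} of the normal-flow identification $S\to S(r)$. In a conformal coordinate write $g=e^{2u}|dz|^2$ and $g(B\,\cdot\,,\,\cdot\,)=\mathrm{Re}(\phi\,dz^2)$, where $\phi\,dz^2$ is the (holomorphic) Hopf differential and $\lambda=e^{-2u}|\phi|$. Matching \eqref{gr} against $\Lambda\,|dz+\mu\,d\bar z|^2$ yields $\mu_r=\tanh r\cdot e^{-2u}\bar\phi$, so $|\mu_r|=\lambda\,|\tanh r|$. As $r\to+\infty$ this converges uniformly on the compact surface $S$ to a limit $\mu_{+}$ with $\|\mu_{+}\|_\infty=\max_S\lambda<1$, and as $r\to-\infty$ to $\mu_{-}=-\mu_{+}$. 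Because the Beltrami norms stay uniformly bounded away from $1$, continuity of the {\Td} in the Beltrami coefficient shows that $[g_r]$ converges to well-defined interior points $[g_{\pm\infty}]\in\Tcal_g(\Sigma)$; hence $\gamma(M)$ extends continuously to $r=\pm\infty$, and it remains to identify the two endpoints with the components of the {\cb}.

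For this last (and hardest) step I would multiply \eqref{gr} by $e^{-2r}$ and let $r\to+\infty$:
\begin{equation}\label{conf}
e^{-2r}g_r\;\longrightarrow\;\tfrac14\big[(1+\lambda^2)g+2\,g(B\,\cdot\,,\,\cdot\,)\big]=\tfrac14\,g\big((I+B)\,\cdot\,,(I+B)\,\cdot\,\big),
\end{equation}
and symmetrically $e^{2r}g_r\to\tfrac14\,g\big((I-B)\,\cdot\,,(I-B)\,\cdot\,\big)$ as $r\to-\infty$; these rescaled limits represent $[g_{+\infty}]$ and $[g_{-\infty}]$. To recognise them as the {\cs}s at infinity I would pass to $\H^3$: the normal geodesic rays issuing from the lifted minimal disk $\tilde S$ to the two sides define hyperbolic Gauss maps onto the two components $\Omega_\pm$ of the domain of discontinuity of the {\qf} group $\Gamma$, and Uhlenbeck's foliation guarantees that these are $\Gamma$-equivariant diffeomorphisms exhausting the two ends of $M$. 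Comparing the hyperbolic metric near $\partial_\infty\H^3$ with the Euclidean metric on $\Omega_\pm$ along these rays should identify the rescaled limits in \eqref{conf} with the pullback of the boundary metric under the Gauss map; descending to $M$ then gives $[g_{\pm\infty}]=\Omega_\pm/\Gamma=X_\pm$, the two components of the {\cb}. I expect the main obstacle to be precisely this identification of the intrinsic rescaled limit with the conformal structure at infinity—an instance of the Epstein-type correspondence between equidistant families and conformal infinity—which requires careful control of the normal exponential map as $r\to\infty$ together with its $\Gamma$-equivariance.
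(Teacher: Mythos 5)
Your setup is sound and runs parallel to the paper's: you write the induced metric on $S(r)$ via the shape operator, observe that $\cosh r \pm \lambda\sinh r>0$ keeps the flow defined for all $r$, compute $|\mu_r|=\lambda|\tanh r|$ (this is exactly the paper's Lemma 3.2), and rescale to get $e^{-2r}g_r\to\tfrac14\,g\big((I+B)\,\cdot\,,(I+B)\,\cdot\,\big)$, which matches the paper's $\lim_{r\to\infty}\cosh^{-2}(r)[\overline{g}_{ij}(\theta,r)]$ up to the harmless constant $4$. But the proof stops at precisely the step that constitutes the paper's entire argument: showing that this rescaled intrinsic limit, transported by the normal flow / hyperbolic Gauss map, \emph{is} the Bers conformal structure on $\Omega_{+}/\Gamma$. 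You name this as ``the main obstacle'' and say it ``should'' follow from an Epstein-type correspondence, but you give no mechanism for it, and nothing in what precedes forces the limit metric to be conformal with respect to the complex structure that $\Omega_{+}$ inherits from $S^2_\infty$. That identification is not formal: a priori the Gauss map is only a $\Gamma$-equivariant diffeomorphism $\tilde S\to\Omega_+$, and one must actually compute the limit metric in the coordinate of $\Omega_+$ to see it is a conformal factor times the round metric.

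The paper closes this gap with Epstein's construction from \cite{Eps84}: parametrize $\tilde S$ by $\theta\in\Omega_+$ via the unique horosphere $H(\theta)$ at $X(\theta)$ tangent to $\tilde S$, record the signed distance $\rho(\theta)$ from the origin of $B^3$ to $H(\theta)$, and use Epstein's explicit formula for the embedding $\Omega_+\to B^3$ to write the first and second fundamental forms of $\tilde S$ in the coordinate $\theta$. A direct computation then gives $\lim_{r\to\infty}\cosh^{-2}(r)[\overline{g}_{ij}(\theta,r)]=e^{2\rho(\theta)}\tfrac{4}{(1+|\theta|^2)^2}\,\I$, which is manifestly conformal in $\theta$, and the transformation law $e^{2\rho(\gamma(\theta))}|\gamma'(\theta)|^2(1+|\gamma(\theta)|^2)^{-2}=e^{2\rho(\theta)}(1+|\theta|^2)^{-2}$ shows the limit descends to a conformal metric on $\Omega_+/\Gamma$, i.e.\ to the Bers boundary structure. (The paper also sketches a second route you might prefer: with $t=\tanh r$ and defining function $f(t)=1-t^2$, the metric extends smoothly to $\Sigma\times[-1,1]$, and one invokes the fact that the conformal infinity of a conformally compact Einstein manifold is independent of the defining function.) Either of these, or an equivalent asymptotic analysis of the normal exponential map near $S^2_\infty$, is needed to turn your final paragraph from a plan into a proof; also note that your Beltrami-convergence argument, while correct, only shows the path has well-defined endpoints in $\Tcal_g(\Sigma)$ and contributes nothing toward identifying them.
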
   
Starting with any embedded surface with principal curvature in the range of $(-1,1)$ in a quasi-Fuchsian manifold, the foliation of {\ps}s 
from this surface forms a path in Teichm\"{u}ller space. It also joins the two components of conformal boundary. We establish this 
general situation in the proof, which is based on Esptein's construction in an unpublished manuscript (\cite{Eps84}).

Minsky (\cite {Min93}) considered the locus in {\TS} formed by pleated surfaces, and discovered that, in $\epsilon_0-$thick part 
of {\TS}, this locus is a quasi-geodesic in the {\Tm}.

In this paper, we answer a question raised by Rubstein, where he asked to relate the geometry of an {\af} {\tm} to distances 
in {\TS} (\cite{Rub07}). We relate this to the {\Tm} and the {\WPm}.

We obtain an upper bound of the {\Td} $d_T$ between the loci of the {\ms} $S$ and $S(r)$ for $r\in (-\infty,\infty)$. 
In particular, we obtain an upper bound of the {\Td} $d_T$ between the {\cs}s of fibers $S(r_1)$ and $S(r_2)$ for $r_1,r_2\in (-\infty,\infty)$.
\begin{theorem}\label{distance}
Let $\lambda_0 = max\{\lambda(z), z \in S\}$ be the maximal principal curvature on the {\ms} $S$, then,
\begin{eqnarray}
d_{T}(S,S(r)) &\le& \frac{1}{2}|\log\frac{1+\lambda_{0}\tanh(r)}{1-\lambda_0\tanh(r)}| \\
d_{T}(S(r_1),S(r_2)) &\le& \frac{1}{2}|\log\frac{1+\lambda_{0}\tanh(r_2)}{1-\lambda_0\tanh(r_2)}-
\log\frac{1+\lambda_{0}\tanh(r_1)}{1-\lambda_0\tanh(r_1)}|. 
\end{eqnarray}
\end{theorem}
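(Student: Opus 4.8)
The plan is to use the normal flow of the foliation $\Fcal$ itself as an explicit quasiconformal map between the relevant fibers and to bound its maximal dilatation, exploiting that the Teichm\"uller distance is dominated by $\tfrac12\log K$ for any quasiconformal map in the correct isotopy class. First I would record the induced metric on a parallel surface $S(r)$ in terms of the geometry of the minimal surface $S$. Writing $\omega_1,\omega_2$ for an orthonormal coframe along the principal directions of $S$, whose principal curvatures are $\lambda(z)$ and $-\lambda(z)$ since $S$ is minimal, the standard equidistant-surface computation in $\mathbb{H}^3$ (Jacobi fields along the normal geodesics, where curvature $-1$ produces the fundamental solutions $\cosh r$ and $\sinh r$) gives
\begin{equation*}
g_r=(\cosh r+\lambda\sinh r)^2\,\omega_1^2+(\cosh r-\lambda\sinh r)^2\,\omega_2^2 .
\end{equation*}
Because principal directions are preserved under the normal flow in a space form, $g_r$ is diagonal in this $g_0$-orthonormal frame, and the flow map $\phi_r\colon S\to S(r)$ is isotopic to the identity by construction of $\Fcal$.

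Next I would compute the pointwise dilatation of $\phi_r$. With $g_r$ diagonal with entries $(\cosh r+\lambda\sinh r)^2$ and $(\cosh r-\lambda\sinh r)^2$, the quasiconformal dilatation of $\phi_r$ at $z$ is the ratio of the larger stretch factor to the smaller one,
\begin{equation*}
K_z(r)=\frac{\cosh r+\lambda(z)\sinh r}{\cosh r-\lambda(z)\sinh r}=\frac{1+\lambda(z)\tanh r}{1-\lambda(z)\tanh r}
\end{equation*}
for $r>0$, the case $r<0$ being symmetric. A one-line derivative check shows this is increasing in $\lambda(z)$, so the maximal dilatation over $S$ is attained where $\lambda(z)=\lambda_0$. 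Then $d_T(S,S(r))\le\tfrac12\log K(r)$ with $\lambda_0$ inserted yields the first inequality, the absolute value absorbing the sign of $r$.

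For the second inequality I would flow directly from $S(r_1)$ to $S(r_2)$. Both $g_{r_1}$ and $g_{r_2}$ are diagonal in the same principal frame, so the relative stretch factors are $A=(\cosh r_2+\lambda\sinh r_2)/(\cosh r_1+\lambda\sinh r_1)$ and $B=(\cosh r_2-\lambda\sinh r_2)/(\cosh r_1-\lambda\sinh r_1)$, and the pointwise dilatation is $\max(A/B,B/A)$. Writing $f_i(\lambda)=(1+\lambda\tanh r_i)/(1-\lambda\tanh r_i)$, one checks $A/B=f_2(\lambda)/f_1(\lambda)$, so the logarithm of the dilatation equals $\log f_2(\lambda)-\log f_1(\lambda)$, which vanishes at $\lambda=0$.

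The main obstacle, and the only genuinely non-formal step, is that in this case it is no longer obvious the quantity is maximized at $\lambda=\lambda_0$. I would settle this by differentiating in $\lambda$,
\begin{equation*}
\frac{d}{d\lambda}\bigl(\log f_2-\log f_1\bigr)=\frac{2\tanh r_2}{1-\lambda^2\tanh^2 r_2}-\frac{2\tanh r_1}{1-\lambda^2\tanh^2 r_1},
\end{equation*}
and observing that $t\mapsto t/(1-\lambda^2 t^2)$ is increasing on the relevant range, so the sign of this derivative is constant, determined by the sign of $\tanh r_2-\tanh r_1$. Hence $|\log f_2(\lambda)-\log f_1(\lambda)|$ is monotone in $\lambda$ and maximized at $\lambda_0$, and $d_T(S(r_1),S(r_2))\le\tfrac12\log K$ gives the second inequality; the various sign configurations of $r_1,r_2$ are handled uniformly by the absolute values in the statement.
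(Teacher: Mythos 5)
Your proposal is correct and follows essentially the same route as the paper: both use the normal flow as the explicit quasiconformal map between fibers, compute its pointwise dilatation (you via the principal stretch ratios $(\cosh r\pm\lambda\sinh r)$, the paper via the Beltrami coefficient $\mu$ in conformal coordinates, which give the same $K=\frac{1+\lambda\tanh r}{1-\lambda\tanh r}$), and both establish the second inequality by flowing directly from $S(r_1)$ to $S(r_2)$ and checking that the resulting dilatation $\frac{\lambda|\tanh r_2-\tanh r_1|}{1-\lambda^2\tanh r_1\tanh r_2}$ is monotone in $\lambda$ so that the maximum occurs at $\lambda_0$. The only differences are presentational.
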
   
A direct consequence is the upper bound for the {\Td} between two {\cbs}.
\begin{cor}\label{boundary} 
If $M=\H^3/\Gamma$ is {\af} with the conformal boundary $\Omega_+/\Gamma \sqcup \Omega_-/\Gamma$, then
\begin{eqnarray}
d_{T}(S,\Omega_{\pm}/\Gamma) \le \frac{1}{2}\log\frac{1+\lambda_{0}}{1-\lambda_0}\\
d_{T}(\Omega_-/\Gamma,\Omega_+/\Gamma) \le \log\frac{1+\lambda_{0}}{1-\lambda_0}.
\end{eqnarray}
\end{cor}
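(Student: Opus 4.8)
The plan is to derive the corollary from Theorem \ref{distance} by passing to the limit $r \to \pm\infty$ in its two inequalities, using Theorem \ref{path} to identify the limiting points of the path $\gamma(M)$ with the conformal structures $\Omega_\pm/\Gamma$ at infinity. First I would recall that the normal flow parameter $r$ records signed distance from the {\ms} $S$, and that by Theorem \ref{path} the ends of $\gamma(M)$, namely the limits of $S(r)$ in $\Tcal_g(\Sigma)$ as $r \to +\infty$ and as $r \to -\infty$, are precisely the two conformal boundary components $\Omega_+/\Gamma$ and $\Omega_-/\Gamma$. Since $\lambda_0 = \max\{\lambda(z) : z \in S\}$ lies in $[0,1)$, the function $t \mapsto \tfrac12\log\frac{1+\lambda_0 t}{1-\lambda_0 t}$ is continuous on $[-1,1]$, and $\tanh(r) \to \pm 1$ as $r \to \pm\infty$.

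Because the {\Td} is continuous on $\Tcal_g(\Sigma)\times\Tcal_g(\Sigma)$ and $S(r) \to \Omega_\pm/\Gamma$ as $r \to \pm\infty$, I would let $r \to +\infty$ in the first inequality of Theorem \ref{distance}. As $\tanh(r)\to 1$ and $0 \le \lambda_0 < 1$, the argument of the logarithm tends to $\frac{1+\lambda_0}{1-\lambda_0} > 1$, so the absolute value may be dropped in the limit, yielding
\begin{equation*}
d_T(S, \Omega_+/\Gamma) = \lim_{r\to+\infty} d_T(S, S(r)) \le \frac{1}{2}\log\frac{1+\lambda_0}{1-\lambda_0}.
\end{equation*}
The same argument with $r\to-\infty$, where $\tanh(r)\to-1$ and $\tfrac12\left|\log\frac{1-\lambda_0}{1+\lambda_0}\right| = \tfrac12\log\frac{1+\lambda_0}{1-\lambda_0}$, gives the identical bound for $\Omega_-/\Gamma$, establishing the first displayed inequality of the corollary.

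For the estimate between the two conformal boundaries I see two routes. The first is the triangle inequality for $d_T$, combining the two bounds just obtained:
\begin{equation*}
d_T(\Omega_-/\Gamma, \Omega_+/\Gamma) \le d_T(\Omega_-/\Gamma, S) + d_T(S, \Omega_+/\Gamma) \le \log\frac{1+\lambda_0}{1-\lambda_0}.
\end{equation*}
The second, more direct, is to take $r_1 \to -\infty$ and $r_2 \to +\infty$ in the second inequality of Theorem \ref{distance}: the two logarithmic terms approach $-\log\frac{1+\lambda_0}{1-\lambda_0}$ and $+\log\frac{1+\lambda_0}{1-\lambda_0}$ respectively, so their difference has absolute value $2\log\frac{1+\lambda_0}{1-\lambda_0}$, and the factor $\tfrac12$ recovers the stated bound. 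The only genuine point requiring care, and thus the main obstacle, is justifying that the limits are taken in the correct topology: one must know that $S(r)$ genuinely converges in $\Tcal_g(\Sigma)$ to the conformal boundary structures rather than merely accumulating near them, which is exactly what Theorem \ref{path} provides, together with the continuity of $d_T$ needed to pass the inequalities to the limit.
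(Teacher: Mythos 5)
Your proposal is correct and follows essentially the same route as the paper: the authors obtain the first inequality by letting $r\to\pm\infty$ in the uniform bound $d_T(S,S(r))<\tfrac12\log\tfrac{1+\lambda_0}{1-\lambda_0}$, with Theorem \ref{path} identifying the limits of $S(r)$ with the conformal boundary components, and they note that the second inequality follows ``by triangle inequality or taking $r_1$ and $r_2$ to $-\infty$ and $\infty$'' --- exactly the two routes you describe. Your added attention to the continuity of $d_T$ and the genuine convergence of $S(r)$ in $\Tcal_g(\Sigma)$ is a fair point the paper leaves implicit, but it introduces no new idea beyond what the paper's argument already relies on.
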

One observes that if two {\cbs} of an {\af} {\tm} are far away in {\Tm}, then the maximal {\pc} of the unique {\ms} is very close to 1.

Epstein (\cite{Eps86}) calculated the maximal dilatation of the hyperbolic Gauss map of a surface homeomorphic to a disk in 
$\H^3$. The inequality $(1.4)$ was essentially due to him (\cite{Eps86} Proposition 5.1). Some local calculations in the proof of 
$(1.2)$ can also be seen in (\cite{Vel99}).

A rival metric on {\TS} to the {\Tm} is the {\WPm}, which offers a vastly different view of {\TS}: it is incomplete (\cite{Chu76}, 
\cite{Wol75}), K\"{a}hlerian (\cite{Ahl61}), and with negative sectional curvatures (\cite{Tro86}, \cite{Wol86}). We obtain a new 
potential for the {\WPm} on {\TS} by studying areas of certain immersed surfaces near Fuchsian locus within the {\qf} space.

A potential of {\WPm} is a function defined on {\TS} $\Tcal_g(\Sigma)$ such that the second variation of this function in direction of two 
{\hqd}s $\alpha dz^2$ and $\beta dz^2$ at a point of $\Tcal_g(\Sigma)$ equals the {\WP} inner product $\left\langle \alpha,\beta\right\rangle_{WP}$, 
up to a constant. Potentials offer important information and computational tools to study the metric. There are several known 
potentials of the {\WPm}: Dirichlet energy of harmonic maps (\cite{Tro87}, \cite{Wol89}, \cite{Jos91}), the Liouville action (\cite{ZT87}), 
renormalized volume of {\qf} manifolds (\cite{TT03}, \cite{KS08}) and Hausdorff dimension of the limit set (\cite{BT08}, \cite{McM08}). 
Those potentials are functions well-defined on the whole {\TS}. But to calculate the variation, it suffices that a function is defined locally 
in a neighborhood. 

We denote a Riemann surface by $(\Sigma,\sigma)$ where $\sigma$ is a conformal structure. 
Our new potential of the {\WPm} is a locally defined function in a neighborhood of $(\Sigma,\sigma)$ $\in \Tcal_g(\Sigma)$ which is based 
on the immersion of a {\ms} in some {\qf} manifold within the {\qf} space. The cotangent space of $\Tcal_g(\Sigma)$ at a point 
$(\Sigma,\sigma)$ can be identified with $Q(\sigma)$, the space of {\hqd}s on $(\Sigma,\sigma)$. Under this identification, 
$(\Sigma,\sigma)$ corresponds to the zero quadratic differential denoted by $0\in Q(\sigma)$. Uhlenbeck (\cite{U83}) showed that there is an open neighborhood $U$ of 
$0\in Q(\sigma)$ such that for any {\hqd} $\alpha dz^2\in U$, there exists a minimal surface $\Sigma(\alpha dz^2)$ immersed in some {\qf} {\tm} such 
that the induced metric on $\Sigma(\alpha dz^2)$ is in the conformal class $\sigma$ and the {\sff} of the immersion is the 
real part of $\alpha dz^2$. The area of $\Sigma(\alpha dz^2)$ with respect to the induced metric, denoted by $|\Sigma(\alpha dz^2)|$, is a function 
defined in the neighborhood $U \subset Q(\sigma)$. 
\begin{theorem}\label{wp} 
The zero quadratic differential $0\in Q(\sigma)$ is a critical point of the area functional $|\Sigma(\alpha dz^2)|:U\to {\R}$. And 
the area functional is a potential of {\WPm} on {\TS}. 
\end{theorem}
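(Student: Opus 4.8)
The plan is to express everything through the conformal factor of the induced metric and to reduce the variational computation to a single semilinear elliptic equation, namely the Gauss equation of the minimal immersion. Writing the induced metric of the {\ms} $\Sigma(\alpha dz^2)$ as $g = e^{2u}g_0$, where $g_0 = \rho_0\,|dz|^2$ is the hyperbolic metric in the conformal class $\sigma$, we have $u\equiv 0$ precisely when $\alpha=0$, since then the immersion is totally geodesic (the Fuchsian locus). Because the {\sff} is $\mathrm{Re}(\alpha dz^2)$ and the immersion is minimal, the shape operator is trace-free with determinant $-|\alpha|^2/\rho^2$, where $\rho = e^{2u}\rho_0$, so the Gauss equation in $\H^3$ reads $K_g = -1 - |\alpha|^2/\rho^2$. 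Combining this with the conformal-change formula $K_g = e^{-2u}(-1 - \Delta_0 u)$, where $\Delta_0$ is the Laplace--Beltrami operator of $g_0$, gives
\begin{equation}\label{eq:gauss-pde}
\Delta_0 u = e^{2u} - 1 + \frac{|\alpha|^2}{\rho_0^2}\,e^{-2u}.
\end{equation}
The structural observation on which both assertions rest is that $\alpha$ enters \eqref{eq:gauss-pde} only through the nonnegative function $|\alpha|^2/\rho_0^2$, which is exactly the pointwise {\WP} energy density of the {\hqd} $\alpha dz^2$.

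From here I would argue as follows. Since the source term is quadratic in $\alpha$, the solution $u=u(\alpha)$ carries no first-order term, so the area $|\Sigma(\alpha dz^2)| = \int_\Sigma e^{2u}\,dA_0$ has vanishing first variation at $\alpha=0$; this yields the critical point. Next I would linearize: writing $u = u^{(2)}+\cdots$ for the quadratic part, \eqref{eq:gauss-pde} gives $(\Delta_0-2)u^{(2)} = |\alpha|^2/\rho_0^2$. The operator $\Delta_0-2$ is negative, hence invertible, on the closed surface --- the same invertibility that makes Uhlenbeck's construction smooth, so $\alpha\mapsto u(\alpha)$ is smooth and the Taylor expansions are legitimate. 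Expanding the area to second order gives $|\Sigma(\alpha dz^2)| = |\Sigma(0)| + 2\int_\Sigma u^{(2)}\,dA_0 + \cdots$, and I would evaluate $\int_\Sigma u^{(2)}\,dA_0$ by pairing the linearized equation against the constant function $-\tfrac12$, which solves $(\Delta_0-2)w=1$. Self-adjointness of $\Delta_0-2$ then collapses the a priori complicated Green's-operator integral to
\begin{equation}\label{eq:collapse}
\int_\Sigma u^{(2)}\,dA_0 = -\frac12\int_\Sigma \frac{|\alpha|^2}{\rho_0^2}\,dA_0 = -\frac12\,\left\langle \alpha,\alpha\right\rangle_{WP}.
\end{equation}
Finally, replacing $\alpha$ by $s\alpha + t\beta$ and extracting the cross term $2\,\mathrm{Re}(\alpha\bar\beta)$ from $|s\alpha+t\beta|^2$, the mixed second variation becomes a constant multiple of $\mathrm{Re}\left\langle \alpha,\beta\right\rangle_{WP}$, the Riemannian {\WPm}, as claimed.

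The conceptual crux is twofold: the observation that the Gauss equation sees $\alpha$ only through $|\alpha|^2$ (which forces criticality and makes the Hessian quadratic in $\alpha$), together with the constant-test-function computation \eqref{eq:collapse} that reduces the Green's operator exactly to the {\WP} pairing, rather than to some kernel-dependent expression. The main obstacle is therefore analytic rather than conceptual: one must justify that $\alpha\mapsto u(\alpha)$ is twice differentiable with uniform control, so that differentiation under the integral sign and the formal Taylor expansion of \eqref{eq:gauss-pde} are valid. This follows from the implicit function theorem applied to \eqref{eq:gauss-pde}, using precisely the invertibility of the linearization $\Delta_0-2$ on the closed hyperbolic surface.
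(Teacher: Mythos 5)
Your proposal is correct and follows essentially the same route as the paper: the Gauss equation $\Delta_0 u = e^{2u}-1+\frac{|\alpha|^2}{g_0^2}e^{-2u}$, vanishing of the first-order term in $u$ (the paper gets $\dot u=0$ from the maximum principle applied to $(\Delta_0-2)\dot u=0$, you from the quadratic dependence on $\alpha$ plus smoothness via invertibility of $\Delta_0-2$), and then integrating the linearized equation $(\Delta_0-2)u^{(2)}=|\alpha|^2/g_0^2$ over $\Sigma$ to collapse the Green's operator to the {\WP} pairing --- your ``pair against the constant $-\tfrac12$'' step is literally the paper's integration of both sides. The only cosmetic difference is that the paper polarizes with complex parameters $t,s$ and differentiates in $t,\bar s$ to land on the Hermitian form $\int\alpha\overline{\beta}/g_0^2\,dA_{g_0}$ directly, whereas you polarize over $\R$ and obtain its real part.
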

\vskip 0.1in
\noindent
{\bf Plan of the paper:} We provide necessary background material in \S 2. The Theorem \ref{path}, the Theorem \ref{distance} 
and the Corollary \ref{boundary} are proved in \S 3. In section \S 4, the Theorem \ref{wp} is established.
\vskip 0.1in
\noindent
{\bf Acknowledgements:} The authors wish to thank Bill Abikoff, Jun Hu, Albert Marden, and Xiaodong Wang for their 
generous help. We also thank the referee for suggestions to improve this paper. The research of the second named author is partially 
supported by a PSC-CUNY grant. 
\section{Preliminaries}
The results in this paper lie in the intersection of several different fields, low dimensional topology, geometric function theory and 
geometric analysis. In this section, we briefly summarize background material for topics involved. We always assume $M$ is orientable and all surfaces involved are closed and orientable of genus at least two.

\subsection{Kleinian groups and quasi-Fuchsian {\tm}s}
A hyperbolic manifold is a complete Riemannian manifold of constant curvature $-1$. The universal cover of a hyperbolic manifold is 
$\H^n$, and the deck transformations induce a representation of the fundamental group of the manifold in $Isom(\H^n)$, the 
(orientation preserving) isometry group of $\H^n$. In the case we are interested in, $Isom(\H^2) = PSL(2,\R)$ which lies naturally in 
$Isom(\H^3)= PSL(2,\C)$.

A subgroup $\Gamma \subset PSL(2,\C)$ is called a {\em Kleinian group} if $\Gamma$ acts on $\H^{3}$ properly discontinuously. For 
any {\kg} $\Gamma$, $\forall\,p\in\H^{3}$, the orbit set
\begin{eqnarray*}
   \Gamma(p)=\{\gamma(p)\ |\ \gamma\in \Gamma\}
\end{eqnarray*}
has accumulation points on the boundary $S^{2}_\infty=\partial\H^{3}$, and these points are the {\em limit points} of $\Gamma$, and 
the closed set of all these points is called the {\em limit set} of $\Gamma$, denoted by $\Lambda_{\Gamma}$. The complement 
of the limit set, i.e.,
\begin{eqnarray*}
   \Omega=S^{2}_{\infty}\setminus\Lambda_{\Gamma}\ ,
\end{eqnarray*}
is called the {\em region of discontinuity}. $\Gamma$ acts properly discontinuously on $\Omega$, whose quotient $\Omega/\Gamma$ 
is a finite union of {\RS}s of the finite type. The quotient $M =\H^{3}/\Gamma$ is a {\tm} with \it conformal boundary \rm
$\Omega/\Gamma$.

Suppose $\Gamma$ is a finitely generated torsion free {\kg} which has more than two limit points, we call $\Gamma$ {\em {\qf}} if its 
limit set $\Lambda_{\Gamma}$ is a closed Jordan curve and both components $\Omega_{\pm}$ of its region of discontinuity are 
invariant under $\Gamma$. When $\Lambda_{\Gamma}$ is actually a circle, we call $\Gamma$ a {\it Fuchsian group} and the 
corresponding $M$ Fuchsian manifold. It is clear that in this case, $M$ is the product space $\Omega_+/\Gamma \times \R$.

If $\Gamma$ is a {\qf} group, Marden \cite{Mar74} proved that the quotient $M=\H^{3}/\Gamma$ is diffeomorphic to
$(\Omega_+/\Gamma)\times(0,1)$, and $\overline{M}=(\H^{3} \cup\Omega)/\Gamma$ is diffeomorphic to 
$(\Omega_+/\Gamma)\times[0,1]$.  $M$ is then called a {\em quasi-Fuchsian {\tm}}.

In this paper we assume that $\Gamma$ contains no parabolic elements, and $M$ is {\qf} but we exclude the case when $M$ is 
Fuchsian since the locus of the normal flow from a totally geodesic {\ms} is a single point in {\TS}. Topologically 
$M = \Sigma \times \R$ where $\Sigma$ is a closed surface of genus $g \ge 2$. The deformation theory of {\kg} yields the following 
simultaneous unformization result, a beautiful theorem of Bers:
\begin{theorem} (\cite {Ber60b})
There is a biholomorphic map $Q: \Tcal_g (\Sigma)\times  \Tcal_g(\Sigma) \rightarrow QF(\Sigma)$ such that $Q(X,Y)$ is the 
unique {\qf} {\tm} in the {\qf} space $QF(\Sigma)$ with $X,Y$ as {\cbs}.
\end{theorem}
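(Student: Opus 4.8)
\emph{Proof proposal.} The plan is to prove simultaneous uniformization via the Measurable Riemann Mapping Theorem of Ahlfors and Bers, constructing $Q$ explicitly by solving Beltrami equations and then verifying its analytic properties. First I would fix a base point: realize the reference surface $\Sigma$ as a quotient $\H/G$, where $G\subset PSL(2,\R)$ is a cocompact Fuchsian group acting on the upper half-plane $U=\H$, so that $G$ also acts on the lower half-plane $L$ with quotient the mirror surface. The {\TS} $\Tcal_g(\Sigma)$ is then identified with the space $M(G)$ of $G$-invariant Beltrami coefficients $\mu$ on $U$ with $\|\mu\|_\infty<1$, taken modulo Teichm\"uller equivalence (two coefficients are equivalent when their normalized quasiconformal solutions agree on $\partial U=\R\cup\{\infty\}$).

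Given marked surfaces $X=(\Sigma,[\mu])$ and $Y=(\Sigma,[\nu])$ with $\mu,\nu\in M(G)$, I would assemble a single Beltrami coefficient on the whole sphere by placing $\mu$ on $U$ and the reflection $\nu^*(z)=\overline{\nu(\bar z)}$ on $L$, obtaining a $G$-invariant $\tilde\mu$ with $\|\tilde\mu\|_\infty<1$. The Measurable Riemann Mapping Theorem produces a unique quasiconformal homeomorphism $w^{\tilde\mu}\colon\widehat{\C}\to\widehat{\C}$, normalized to fix $0,1,\infty$, with {\cd} $\tilde\mu$. Because $\tilde\mu$ is $G$-invariant, the conjugate $\Gamma:=w^{\tilde\mu}\,G\,(w^{\tilde\mu})^{-1}$ consists of M\"obius transformations, hence is a {\kg}; the image $w^{\tilde\mu}(\R)$ is a $\Gamma$-invariant quasicircle whose complementary Jordan domains $\Omega_+=w^{\tilde\mu}(U)$ and $\Omega_-=w^{\tilde\mu}(L)$ are the two invariant components of the region of discontinuity, so $\Gamma$ is {\qf}. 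By construction the {\cb} $\Omega_+/\Gamma$ carries the structure determined by $\mu|_U$, namely $X$, while $\Omega_-/\Gamma$ carries the structure determined by $\nu^*$, namely $Y$. I would set $Q(X,Y)=\Gamma$ and check that this is independent of the representatives $\mu,\nu$ by uniqueness in the Beltrami equation, so that $Q$ descends to $\Tcal_g(\Sigma)\times\Tcal_g(\Sigma)$.

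For the remaining conclusions I would invoke the Ahlfors--Bers theorem on holomorphic dependence: the normalized solution $w^{\tilde\mu}$, and hence the matrix entries of a fixed set of generators of $\Gamma$, depend holomorphically on $\tilde\mu$, and therefore on the pair $(\mu,\nu)$, which shows that $Q$ is holomorphic. Surjectivity follows from the defining property of a {\qf} group: its limit set is a Jordan curve bounding two invariant domains, each quotient a closed genus-$g$ surface, so every element of $QF(\Sigma)$ arises as $Q$ of its pair of {\cbs}. Injectivity follows from uniqueness in the measurable Riemann mapping, since a conjugacy respecting the marking induces boundary maps forcing $[\mu]=[\mu']$ and $[\nu]=[\nu']$. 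Finally, a holomorphic bijection between complex manifolds of equal finite dimension is automatically biholomorphic, which yields the theorem.

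The main obstacle I anticipate is not the geometric construction but the two analytic inputs: establishing holomorphic dependence of the normalized solution of the Beltrami equation on its coefficient, which is the technical heart of the Ahlfors--Bers theory, and verifying that $Q$ is genuinely well defined on the quotient {\TS}s, i.e.\ that altering $\mu,\nu$ within their Teichm\"uller classes changes $w^{\tilde\mu}$ only by post-composition with a M\"obius transformation, leaving the conjugacy class of $\Gamma$ and its marked {\cbs} unchanged. Some care is also needed with orientation conventions in the reflection $\nu\mapsto\nu^*$, so that the lower quotient $\Omega_-/\Gamma$ is $Y$ itself rather than its complex conjugate.
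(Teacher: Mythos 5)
The paper does not prove this statement at all---it is quoted as background (Bers' simultaneous uniformization theorem, cited to \cite{Ber60b})---and your proposal reproduces exactly the classical Ahlfors--Bers argument from that source: solve the Beltrami equation for the coefficient $\mu$ on the upper half-plane glued to the reflected $\nu^*$ on the lower, conjugate the Fuchsian group, and invoke holomorphic dependence on parameters. Your sketch is correct in outline, and you rightly flag the one point needing care: with the naive parametrization the map is holomorphic in $X$ but anti-holomorphic in $Y$, so the ``biholomorphic'' statement requires the second factor to carry the conjugate complex structure (equivalently, to be the Teichm\"uller space of the mirror surface), which is precisely the orientation convention you mention for making $\Omega_-/\Gamma$ equal to $Y$ rather than its conjugate.
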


\subsection{Foliation of parallel surfaces}
In the following we review some results in \cite{U83}. Let $M$ be a {\qf} $3$-manifold which contains a {\ms} $S$ whose {\pc}s are in 
the range of $(-1,1)$. Then there is an {\ef} of $M$ formed by {\ps}s $\{S(r)\}_{r \in \R}$. Suppose the coordinate system on
$S\equiv S \times\{0\}$ is isothermal so that, using the local coordinate $z=x+iy,$ the induced metric $(dx,dy)[g_{ij}(z,0)]_{2\times 2}(dx,dy)^T$ on $S$ can be written in the form 
\begin{equation*}
   [g_{ij}(z,0)] = e^{2v(z)}\I
\end{equation*}
for some function $v(z)$ and where $\I$ is the $2\times{}2$ identity matrix. Let
$(dx,dy)[h_{ij}(z,0)]_{2\times 2}(dx,dy)^T$
be the {\sff} of $S$. We choose $\varepsilon>0$ to be sufficiently small, then the (local) 
diffeomorphism
\begin{align*}
   S \times(-\varepsilon,\varepsilon)&\to{}M\\
   (x^{1},x^{2},r)&\mapsto
   \exp_{x}(r\nu)
\end{align*}
induces a coordinate patch in $M$. Let $S(r)$ be the family of {\ps}s with respect to $S$, i.e.
\begin{equation*}
   S(r)=\{\exp_{x}(r\nu)\ |\ x\in{}S\}\ ,
   \quad{}r\in(-\varepsilon,\varepsilon)\ .
\end{equation*}

The induced metric $(dx,dy)[g_{ij}(z,r)](dx,dy)^T$ on $S(r)$ can be computed as
\begin{align}
[g_{ij}(z,r)]=&\ [g_{ij}(0,r)](\cosh{r}\I+\sinh{r}[g_{ij}(0,r)]^{-1}[h_{ij}(z,0)])^{2} \label{metric} \\
=&\ e^{2v(z)}(\cosh{r}\I+\sinh{r}e^{-2v(z)}[h_{ij}(z,0)])^{2}. \notag
\end{align}
And it is easy to verify, under the {\pc} condition, that induced metric on $S(r)$ (\ref{metric}) is well defined for all $r \in \R$. Moreover, the set of {\ps}s $\{S(r)\}_{r \in \R}$ forms a foliation of $M$, which is called the {\ef} or the normal flow.

The second fundamental form $(dx,dy)[h_{ij}(z,r)](dx,dy)^T$ on $S(r)$ is given by
\begin{equation*}
   h_{ij}(z,r)=\frac{1}{2}\,\ppl{}{r}\,g_{ij}(z,r)\ ,
   \quad{}1\leq{}i,j\leq{}2\ .
\end{equation*}
We denote $\lambda(z), -\lambda(z)$ as the {\pc}s of $S$ with $\lambda(z)\geq 0.$ Then the {\pc}s of $S(r)$ are given by
\begin{equation}\label{principal curvature}
   \lambda_{1}(z,r)=\frac{\tanh{}r-\lambda(z)}{1-\lambda(z)\tanh{}r}
   \qquad\text{and}\qquad
   \lambda_{2}(z,r)=\frac{\tanh{}r+\lambda(z)}{1+\lambda(z)\tanh{}r}\ .
\end{equation}
We observe that they are increasing functions of $r$ for fixed $z \in S$, and they both approach $\pm 1$ when 
$r \rightarrow \pm\infty$, respectively. 

\subsection{Metrics on {\TS}}

For references on quasiconformal mappings and Teichm\"{u}ller theory, see, for example, the paper \cite{Ber60a} and the book 
\cite{GL00}.

Points in {\TS} $\Tcal_g(\Sigma)$ are equivalence classes of {\cs}s (or alternatively, {\hym}s) on a closed surface $\Sigma$. Two Riemann surfaces $(\Sigma, \sigma_1)$ and $(\Sigma, \sigma_2)$ 
are equivalent if there is a homeomorphism $f: \Sigma \rightarrow \Sigma$, homotopic to the 
identity, which is a conformal map from $(\Sigma, \sigma_1)$ to $(\Sigma, \sigma_2)$. {\TS} carries a natural topology which is induced 
by several natural metrics, and among these metrics, the most studied are the {\Tm} and the {\WPm}.

The {\Td} between $(\Sigma, \sigma_1)$ and $(\Sigma, \sigma_2)$  is given by
\begin{equation}
  d_{T}((\Sigma, \sigma_1), (\Sigma, \sigma_2)) = {\frac{1}{2}}\log inf_{f}K[f],
\end{equation}
where $f:(\Sigma, \sigma_1)\to (\Sigma, \sigma_2)$ is a quasiconformal map homotopic to the identity and $K[f]$ is the 
maximal dilatation of $f$. This infimum is reached by the Teichm\"{u}ller map, which yields very geometrical description 
(\cite{Ber60a}).

The {\WPm} on {\TS} $\Tcal_g(\Sigma)$ is induced via an inner product on the cotangent space $Q(\sigma)$. At a point 
$(\Sigma, \sigma)\in \Tcal_g(\Sigma)$, let $\alpha dz^2,\beta dz^2 \in Q(\sigma)$ be two {\hqd}s on $(\Sigma, \sigma)$. 
The {\WP} co-metric is defined by the Hermitian form
\begin{equation}
\left\langle \alpha,\beta\right\rangle_{WP}= \int_\Sigma\frac{\alpha\overline{\beta}}{g_0^2}dA_{g_0},
\end{equation}
where $g_0|dz|^2$ is the unique hyperbolic metric in the conformal class $\sigma$, and $dA_{g_0}$ is its area element.

Infinitesimally, the Teichm\"{u}ller norm of a {\hqd} $\phi(z)dz^2\in Q(\sigma)$ is the $L^1$-norm of $\phi(z)dz^2$, while the 
{\WP} norm is induced by the $L^2$-norm. 

\section{Foliation as a Path in Teichm\"{u}ller Space}

In this section, we prove the theorems concerning the locus of the foliation $\{S(r)\}_{r\in \R}$ of {\ps}s for $M$ in {\TS}. In 
\S 3.1, we show the Theorem 1.1 and provide alternative point of view from geometric analysis. In \S 3.2, we calculate the 
dilatation of the quasi-conformal map from the {\ms} to a fiber $S(r)$, and in \S 3.3 we treat the general case, i.e, the 
quasi-conformal map between any two points on the locus.

\subsection{Joining the boundaries}

Let $S$ be the {\ms} in an {\af} manifold $M$, and $\{S(r)\}_{r\in \R}$ be the foliation of {\ps}s from $S$. The induced metric $(dx,dy)[g_{ij}(z,r)](dx,dy)^T$ on each fiber $S(r)$ is given by (\ref{metric}), and such metric determines a conformal structure $\sigma(r)$ on $S(r)$. The path 
$\gamma$ is formed by the set $\{(S(r), \sigma(r)) \in \Tcal_g(\Sigma)\}_{r\in \R}$. The Theorem 1.1 basically states that the {\cs}s 
determined by the limiting metrics are the {\cbs} $\Omega_+/\Gamma$ and $\Omega_-/\Gamma$, prescribed in Bers' 
Uniformization Theorem 2.1, where $M=\H^3/\Gamma$.

\begin{proof}[Proof of Theorem \ref{path}]
Consider the disjoint decomposition of the boundary of $\H^3$: $S_\infty^2=\Lambda \sqcup \Omega_+ \sqcup \Omega_-$, 
where $\Lambda_{\Gamma}$ is the limit set of $\Gamma$, and let $S \subset M$ be an embedded surface (not necessary to be minimal) with {\pc}s in 
the range of $(-1,1)$. The universal cover of $(M,S,\Omega_+/\Gamma,\Omega_-/\Gamma)$ is 
$(\H^3, \tilde{S}, \Omega_+, \Omega_-)$, where $\tilde{S}$ is topologically a disk with boundary $\Lambda_\Gamma$.

Note that we do not assume $S$ is a {\ms} in this proof, as long as it satisfies the {\pc} condition, or equivalently, the normal 
flow from $S$ can be extended to infinities. Then $M$ is isometric to $S \times \R$ with the metric 
\begin{equation}
dr^2 + (dx,dy)[g_{ij}(z,r)](dx,dy)^T,
\end{equation} 
where the second term is the induced metric on $S(r)$ as in (\ref{metric}), while $\{z\}$ is the conformal coordinate on $S$.

Let us make it clear how to follow the locus of $S(r)$ in {\TS}: for each $r$, the induced metric on $S(r)$ can be expressed in the 
complex form of $w(z,r)|dz+\mu(z,r)d\bar{z}|^2$. Solving the Beltrami equation $f_{\bar{z}} = \mu(z,r)f_z$ for each $r$ provides 
a quasi-conformal map from $S$ to $S(r)$ which is a diffeomorphism a.e., which associates a complex structure on $S(r)$, 
a point in {\TS} (\cite{Ber60a}).

We will work in the universal cover since in what follows is invariant under the action of $\Gamma$. We will use coordinates on 
$\Omega_+$ to write the embedding of $\tilde{S}$ into $\H^3$. $\H^3$ is identified with $B^3=\{(x,y,z)|x^2+y^2+z^2<1\}$ and 
$S_\infty^2$ with the unit sphere 
\begin{equation*}
S^2=\{X(\theta):=(\frac{2\Re\theta}{|\theta|^2+1},\frac{2\Im\theta}{|\theta|^2+1},\frac{|\theta|^2-1}{|\theta|^2+1})|\theta\in S_\infty^2\}.
\end{equation*}
Given a point $X(\theta)\in S^2$, there is unique horoshpere $H(\theta)$ at $X(\theta)$ tangent to  $\tilde{S}$. Therefore 
$\tilde{S}$ determines a function $\rho: S^2\to \mathbb{R}$ on $\Omega_+$ whose absolute value is the hyperbolic distance 
from $(0,0,0)\in B^3$ to $H(\theta)$, and $\rho(\theta)$ is positive when $(0,0,0)$ is out of $H(\theta)$, and it is negative when $(0,0,0)$ 
is in $H(\theta)$.

Using the function $\rho(\theta)$, Epstein (\cite{Eps84} (2.4)) obtained the embedding of $\tilde{S}$ in $B^3$:
\begin{eqnarray*}
\Omega_+ \to& B^3\\
\theta  \mapsto &\frac{|D\rho|^2+(e^{2\rho}-1)}{|D\rho|^2+(e^{\rho}+1)^2}X(\theta)+\frac{2D\rho}{|D\rho|^2+(e^{\rho}+1)^2},
\end{eqnarray*}
where $D\rho$ is the gradient of $\rho(\theta)$ with respect to the canonical metric on $S^2$: $\frac{4|d\theta|^2}{(1+|\theta|^2)^2}$.

With above formula of the embedding, one can work out explicitly the first and second fundamental forms of $\tilde{S}$ 
$$(d \Re\theta, d \Im\theta)[\overline{g}_{ij}(\theta,0)](d \Re\theta, d \Im\theta)^T,$$ 
$$(d \Re\theta, d \Im\theta)[\overline{h}_{ij}(\theta,0)](d \Re\theta, d \Im\theta)^T,$$
in the coordinate $\theta$ (see \cite{Eps84} (5.4, 5.6)).

As in the formula (\ref{metric}), the induced metric $(d \Re\theta, d \Im\theta)[\overline{g}_{ij}(\theta,r)](d \Re\theta, d \Im\theta)^T$ on the 
{\ps} $\widetilde{S(r)}$, with distance $r>0$ to $\tilde{S}$ is again 
\begin{equation*}
[\overline{g}_{ij}(\theta,r)]=[\overline{g}_{ij}(\theta,0)](\cosh{r}\I+\sinh{r}[\overline{g}_{ij}(\theta,0)]^{-1}[\overline{h}_{ij}(\theta,0)])^{2}.
\end{equation*}
When $r\to \infty$, the matrix $[\overline{g}_{ij}(\theta,r)]$ does not converge. Using the matrix $\cosh^{-2}{r}[\overline{g}_{ij}(\theta,r)]$ which induces 
the same {\cs} as $[\overline{g}_{ij}(\theta,r)]$, we get the limit (see \cite{Eps84} (5.10)):
\begin{align*}
\lim_{r\to \infty}\cosh^{-2}{(r)}[\overline{g}_{ij}(\theta,r)]=&\ [\overline{g}_{ij}(\theta,0)](\I+[\overline{g}_{ij}(\theta,0)]^{-1}[\overline{h}_{ij}(\theta,0)])^{2}\\ =&\ e^{2\rho(\theta)}\frac{4}{(1+|\theta|^2)^2}\I.
\end{align*}
We can check that, for any element $\gamma\in \Gamma$, the function $\rho(\theta)$ satisfies
$$e^{2\rho(\gamma(\theta))}\frac{|\gamma'(\theta)|^2}{(1+|\gamma(\theta)|^2)^2}=e^{2\rho(\theta)}\frac{1}{(1+|\theta|^2)^2}.$$
Therefore $e^{2\rho(\theta)}\frac{4|d\theta|^2}{(1+|\theta|^2)^2}$ defines a metric on $\Omega_+/\Gamma$ which is conformal under the 
coordinate $\theta$. Thus this metric
induces the same conformal structure on $\Omega_+/\Gamma$ coming from Bers's Uniformization.

It is clear that one can use above calculation for $r<0$ to treat the case of the {\RS} $\Omega_-/\Gamma$.
\end{proof}

We want to also provide an alternative approach to the Theorem 1.1, in a much more general setting. Let $\bar{N}$ be a 
compact manifold with boundary $\partial N$. We call $f$ a {\it defining function} if $f$ is a smooth function on $\bar{N}$ 
with a first order zero on $\partial N$. A Riemannian metric $g$ on $N = int(\bar{N})$ is called {\it conformally compact} if 
for any defining function $f$, $f^2g$ extends as a smooth metric on $\bar{N}$, whose restriction defines a metric and a 
well-defined {\cs} $C$ on $\partial N$. The pair $(\partial N, C)$ is the conformal infinity of $N$. A {\qf} manifold is a 
conformally compact Einstein manifold and the conformal infinity of a conformal compact Einstein manifold is 
independent of the choice of boundary defining functions (\cite{FG85}, \cite{GL91}). Therefore we need to find a 
defining function for the metric $g_M$ in formula $(3.1)$.

Let $t = \tanh(r)$, then $t \in (-1,1)$ and the formula $(3.1)$ can be written as
\begin{equation*}
g'_M = {\frac{1}{(1-t^2)^2}}dt^2 + {\frac{1}{1-t^2}}(dx, dy)h(z,t)(dx,dy)^T,
\end{equation*}
where $h(z,t) = e^{2v(z)}[\I+te^{-2v(z)}A(z)]^{2}$ and $z=x+iy$. Then $M$ is isometric to $\Sigma \times (-1,1)$ with the metric $g'_M$.

Now we define $f(t) = 1-t^2$, then $f(\pm 1) = 0$ and $f'(\pm 1) \not= 0$, and 
\begin{equation*}
f^{2}(t)g'_M = dt^2 +(1-t^2)(dx, dy)h(z,t)(dx,dy)^T.
\end{equation*}

It is then easy to see that $f^{2}(t)g'_M$ is a smooth metric on the compactified $\bar{M} = M \cup \partial M$, isometric 
to $\Sigma \times [-1,1]$, with conformal infinity the disjoint union of two {\cs}s achieved by taking $t = \pm 1$. Therefore 
by above definitions, $g(z,r)$, hence the {\ps}s, approach two components of the conformal infinity.
\begin{rem}
The question whether the locus $\gamma$ is simple in {\TS} is not known.
\end{rem}
\subsection{{\Td} bounds.} In this subsection, we prove formulas $(1.1)$ and $(1.3)$, i.e, we compare the {\cs} on $S(r)$ 
with the {\ms} $S$. Solutions are in a simpler form when we use conformal coordinates on a {\ms} since the {\mc} is zero.

The strategy is quite straightforward: we express the induced metric $(dx,dy)[g_{ij}(z,r)](dx,dy)^T$ on $S(r)$ in its complex form, then 
estimate the dilatation for the quasi-conformal map which assigns the complex structure on $S(r)$.

We fix the metric $(dx,dy)[g_{ij}(z,0)](dx,dy)^T=e^{2v(z)}dzd\overline{z}$ on $S$ and the {\sff} $(dx,dy)[h_{ij}(z,0)](dx,dy)^T$. Since the surface $S$ is minimal, we may write 
the matrix $e^{-2v(z)}[h_{ij}(z,0)]$ as 
\begin{equation*}
\begin{pmatrix}
                 a & b\\
                      b & -a
              \end{pmatrix}
\end{equation*}
with $\pm\lambda$ as eigenvalues, i.e., the principal curvatures of $S$. And we define $\lambda_0 > 0$ as the maximum of 
the {\pc}s of $S$. It is easy to see that $a^2+b^2 = \lambda^2$. 

Let $p^r$ be the intersection point of $S(r)$ ($r\in (-\infty, \infty)$) and the geodesic perpendicular to $S$ at $p$. There is a 
diffeomorphism $u^r:S\to S(r)$ sending $p$ to $p^r$.
\begin{lem}
This map $u^r:S\to S(r)$ is $\lambda_{0}|\tanh(r)|-$quasiconformal. 
\end{lem}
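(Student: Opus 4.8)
The plan is to exploit the fact that, in the normal-flow coordinate $z$ in which both $S$ and $S(r)$ are parametrized by the exponential map, the diffeomorphism $u^r$ is simply the identity $z\mapsto z$. Consequently its {\cd} is nothing but the complex dilatation of the target metric $[g_{ij}(z,r)]$ measured against the {\cs} carried by $S$. Since $S$ is a {\ms} equipped with the isothermal metric $e^{2v(z)}\I$, the whole problem reduces to reading off the {\bc} of the matrix $[g_{ij}(z,r)]$ given by (\ref{metric}), and then bounding its modulus. Here $\lambda_0|\tanh r|$-quasiconformal is understood in the sense that the $L^\infty$-norm of the {\bc} is at most $\lambda_0|\tanh r|$, which is admissible because $\lambda_0<1$ forces $\lambda_0|\tanh r|<1$.

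First I would set $A=e^{-2v(z)}[h_{ij}(z,0)]=\begin{pmatrix} a & b\\ b & -a\end{pmatrix}$, so that (\ref{metric}) reads $[g_{ij}(z,r)]=e^{2v}(\cosh r\,\I+\sinh r\,A)^2$. The decisive algebraic fact is that this trace-free symmetric $A$ satisfies $A^2=(a^2+b^2)\I=\lambda^2\I$, which makes the square collapse. Expanding and using $2\cosh r\sinh r=\sinh(2r)$ gives
\[
[g_{ij}(z,r)]=e^{2v}\left[(\cosh^2 r+\lambda^2\sinh^2 r)\,\I+\sinh(2r)\,A\right],
\]
from which I read the coefficients $E=e^{2v}(\cosh^2 r+\lambda^2\sinh^2 r+a\sinh(2r))$, $G=e^{2v}(\cosh^2 r+\lambda^2\sinh^2 r-a\sinh(2r))$, and $F=e^{2v}\,b\,\sinh(2r)$, and then apply the standard expression $\mu=(E-G+2iF)/(E+G+2\sqrt{EG-F^2})$ for the {\cd} of a Riemannian metric in the coordinate $z=x+iy$.

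The heart of the computation is the simplification of $EG-F^2$. Using $a^2+b^2=\lambda^2$ together with $\sinh^2(2r)=4\sinh^2 r\cosh^2 r$, one finds $e^{-4v}(EG-F^2)=(\cosh^2 r+\lambda^2\sinh^2 r)^2-\lambda^2\sinh^2(2r)=(\cosh^2 r-\lambda^2\sinh^2 r)^2$, a perfect square which is positive since $\lambda|\tanh r|<1$. The denominator therefore collapses to $4e^{2v}\cosh^2 r$, the numerator is $2e^{2v}\sinh(2r)(a+ib)$, and hence $\mu=\tanh r\,(a+ib)$. Thus $|\mu(z,r)|=\lambda(z)\,|\tanh r|\le \lambda_0|\tanh r|$, which is exactly the claimed bound on $u^r$.

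The step I expect to be the main obstacle is not any single algebraic identity, but rather the clean conceptual justification that the {\cd} of the coordinate-identity map $u^r$ coincides with the complex dilatation of the target metric, carried out with the correct orientation convention so that the formula for $\mu$ in terms of $E,F,G$ produces $|\mu|$ rather than $1/|\mu|$. Once that is fixed, the perfect-square collapse of $EG-F^2$ is a pleasant but entirely mechanical verification, and the passage from $|\mu|=\lambda|\tanh r|$ to the uniform bound via $\lambda_0=\max_z\lambda(z)$ is immediate.
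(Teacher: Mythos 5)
Your proposal is correct and follows essentially the same route as the paper: you pull back the induced metric on $S(r)$ via the normal flow, read off $E,F,G$ from the squared matrix $e^{2v}(\cosh r\,\I+\sinh r\,A)^2$, verify $EG-F^2=e^{4v}(\cosh^2 r-\lambda^2\sinh^2 r)^2$, and conclude $\mu=\tanh(r)(a+ib)$ with $|\mu|=\lambda|\tanh r|\le\lambda_0|\tanh r|$. The only cosmetic difference is that you expand the square using $A^2=\lambda^2\I$ rather than squaring the matrix entrywise; the resulting coefficients and the final bound are identical to the paper's.
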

\begin{proof}
Assume $\rho^2d\zeta d\overline{\zeta}$ is the induced metric on $S(r)$ from the embedding $S(r)\to M$, in terms of the 
conformal coordinate $\zeta.$ To calculate the complex dilatation, we need to write the pull-back 
$(u^r)^*\rho^2d\zeta d\overline{\zeta}$ in its complex form $w(z,r)|dz+\mu(z,r)d\bar{z}|^2$ using the coordinate $z$ on $S$.

Write $z=x+iy$, $\alpha: = \cosh(r)$ and $\beta: = \sinh(r)$. We find that

\begin{eqnarray*}
(u^r)^*\rho^2d\zeta d\overline{\zeta}
        &=&(dx,dy)e^{2v(z)}(\cosh{r}\I+\sinh{r}e^{-2v(z)}[h_{ij}(z,0)])^{2}(dx,dy)^T \\
     &=&(dx,dy)e^{2v(z)}\begin{pmatrix}
                 \alpha+a\beta & b\beta\\
                      b\beta & \alpha-a\beta
              \end{pmatrix}^2(dx,dy)^T \\
                                   &=:&Edx^2+2Fdxdy+Gdy^2, 
\end{eqnarray*}
where 
\begin{eqnarray*}
E & = &e^{2v(z)} (\alpha^2+2a\alpha\beta+\beta^{2}\lambda^2)\\
 F &=&e^{2v(z)} (2b\alpha\beta) \\
 G&=& e^{2v(z)} (\alpha^2-2a\alpha\beta+\beta^{2}\lambda^2).
\end{eqnarray*}
From standard Riemannian geometry, we have 
\begin{equation}
\mu(z,r)  = \frac{E-G+2{\sqrt{-1}}F}{E+G+2{\sqrt{EG-F^2}}}.
\end{equation}

Here we have
\begin{eqnarray*}
EG-F^2 = e^{4v(z)}(\alpha^2 - \lambda^2 \beta^2)^2,
\end{eqnarray*}
and the complex dilatation of $u^r$ in a neighborhood of $p\in S$ is
\begin{eqnarray*}
\mu(z,r) &=& \tanh(r)(a + {\sqrt{-1}}b),
\end{eqnarray*}
with $|\mu(z,r)| = | \lambda(z)\tanh(r)|\leq \lambda_0|\tanh(r)|<1$. Since the surface $\Sigma$ is closed, 
$u^r$ is $\lambda_{0}|\tanh(r)|-$quasiconformal.
\end{proof}
Directly from $(2.3)$, the {\Td} between $S$ and $S(r)$ is bounded from above, i.e., 
\begin{eqnarray*}
d_{T} (S, S(r)) \le {\frac{1}{2}}\log{\frac{1+\lambda_0|\tanh(r)|}{1-\lambda_0|\tanh(r)|}} < 
{\frac{1}{2}}\log{\frac{1+\lambda_0}{1-\lambda_0}}.
\end{eqnarray*}
This completes the parts $(1.1)$ and $(1.3)$ in the Theorem 1.2, and the Corollary 1.3.
\subsection{{\Td} bounds: general case.} In this subsection, we complete the proof of the Theorem 1.2. The 
same strategy applies: given any two fiber surfaces $S(r_1)$ and $S(r_2)$, we determine the quasi-conformal 
map between them obtained from solving the Beltrami equation, and estimate the Beltrami coefficient.

There is a natural map from $u:S(r_1)\to S(r_2)$ given by the normal flow. More precisely, let $p \in S(r_1)$. Then 
$p' = u(p)$ is the intersection point of $S(r_2)$ and the geodesic perpendicular to $S(r_1)$ at $p$. 
\begin{lem}
The map $u$ is $k$-quasi-conformal with 
$k = \frac{\lambda_0|\tanh(r_2)-\tanh(r_1)|}{1-\lambda_0^2\tanh(r_2)\tanh(r_1)}$.
\end{lem}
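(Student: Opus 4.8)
The plan is to factor the normal-flow map $u : S(r_1) \to S(r_2)$ through the \ms\ $S$ and reduce to the computation already performed in the previous lemma. Because the parallel surfaces $\{S(r)\}$ are the level sets of the distance function from $S$, the geodesics normal to $S$ are precisely the common orthogonal trajectories of the whole foliation; in particular the geodesic perpendicular to $S(r_1)$ at a point $p^{r_1}$ is the same normal geodesic issuing from the corresponding point $p\in S$. Hence, writing $u^{r}: S \to S(r)$ for the normal-flow map from the minimal surface, one has $u\circ u^{r_1} = u^{r_2}$, so that $u = u^{r_2}\circ (u^{r_1})^{-1}$. The \cd s of $u^{r_1}$ and $u^{r_2}$ with respect to the conformal coordinate $z$ on $S$ are known from the previous lemma: $\mu_1 = \tanh(r_1)(a+\sqrt{-1}\,b)$ and $\mu_2 = \tanh(r_2)(a+\sqrt{-1}\,b)$.

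First I would apply the standard composition rule for Beltrami coefficients. Writing $f = u^{r_1}$, the \cd\ of $u = u^{r_2}\circ f^{-1}$ at the image point $f(z)$ is
\begin{equation*}
\mu(f(z)) = \frac{\mu_2-\mu_1}{1-\overline{\mu_1}\,\mu_2}\cdot\frac{f_z}{\overline{f_z}}.
\end{equation*}
The phase factor $f_z/\overline{f_z}$ has modulus one, so it does not affect the maximal dilatation, and therefore
\begin{equation*}
|\mu(f(z))| = \left|\frac{\mu_2-\mu_1}{1-\overline{\mu_1}\,\mu_2}\right|.
\end{equation*}

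It then remains to substitute and to optimize over the surface. Setting $\omega = a+\sqrt{-1}\,b$ and using $a^2+b^2=\lambda(z)^2$ from the minimality of $S$, one finds $\mu_2-\mu_1 = (\tanh r_2-\tanh r_1)\,\omega$ and $\overline{\mu_1}\,\mu_2 = \lambda(z)^2\tanh r_1\tanh r_2$, which is real; hence the pointwise dilatation is
\begin{equation*}
|\mu(f(z))| = \frac{\lambda(z)\,|\tanh r_2-\tanh r_1|}{1-\lambda(z)^2\tanh r_1\tanh r_2}.
\end{equation*}
Differentiating in $\lambda$ shows this expression is increasing on $[0,1)$, since the sign of the derivative is governed by $1+\lambda^2\tanh r_1\tanh r_2>0$; thus its supremum over the closed surface $S$ is attained at $\lambda=\lambda_0$, giving the asserted constant $k$. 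The one point requiring care is the geometric factorization $u = u^{r_2}\circ (u^{r_1})^{-1}$, i.e., that the normal flow identifies the orthogonal trajectories of all fibers with the normals to $S$; once this is in place, the algebra is routine and the monotonicity check in $\lambda$ completes the argument.
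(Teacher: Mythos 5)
Your argument is correct, and it arrives at the paper's key pointwise identity $|\mu| = \frac{\lambda(z)|\tanh r_2-\tanh r_1|}{1-\lambda(z)^2\tanh r_1\tanh r_2}$ by a genuinely different route. The paper works intrinsically on $S(r_1)$: it writes the (no longer trace-free) shape operator of $S(r_1)$ as a general symmetric matrix with entries $a,b,c$ and eigenvalues $\lambda_1,\lambda_2$, pulls back the metric of $S(r_2)$ via the parallel-surface formula with parameter $r_2-r_1$, computes $E',F',G'$ directly, obtains the intermediate expression $|\mu(z')| = \frac{|\lambda_2-\lambda_1|}{|\lambda_1+\lambda_2+2\coth(r_2-r_1)|}$, and only then converts to the stated form using the principal-curvature identities (2.2) for $S(r_1)$. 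You instead factor $u = u^{r_2}\circ(u^{r_1})^{-1}$ through the minimal surface and invoke the composition rule for Beltrami coefficients, which reduces everything to the already-computed $\mu_j=\tanh(r_j)(a+\sqrt{-1}\,b)$ of Lemma 3.2 and a one-line evaluation of $\bigl|\frac{\mu_2-\mu_1}{1-\overline{\mu_1}\mu_2}\bigr|$. Both arguments ultimately rest on the same geometric input, which you correctly single out: the normal geodesics of $S$ are orthogonal to every leaf $S(r)$ (Gauss lemma for the normal exponential map), so the normal flow maps compose as claimed. Your route is shorter, and it makes conceptually transparent why the answer is the pseudo-hyperbolic distance between $\tanh(r_1)\omega$ and $\tanh(r_2)\omega$ in the unit disk — in particular $k<1$ comes for free, whereas the paper verifies it by a separate estimate at the end. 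The paper's route is a self-contained Riemannian computation that additionally records the dilatation in terms of the principal curvatures of the non-minimal leaf $S(r_1)$, which has some independent interest. The final monotonicity check in $\lambda$ is identical in both, and your justification via the sign of $1+\lambda^2\tanh r_1\tanh r_2>0$ is correct.
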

\begin{proof}
Locally the metric on $S(r_1)$ is $e^{2v'}dz'd\overline{z'}$. The second fundamental form is $[h'_{ij}(z',0)]$ and we write 
the matrix $e^{-2v'(z')}[h'_{ij}(z',0)]$ as 
\begin{equation*}
\begin{pmatrix}
                 a & b\\
                 b & c
              \end{pmatrix}
\end{equation*}
with $\lambda_j$ as eigenvalues.

Therefore $a+c = \lambda_1+\lambda_2$ and $ac-b^2 = \lambda_1\lambda_2$.

The pull-back metric on $S(r_2)$ by $u$ is then given by
\begin{eqnarray*}
   &&(dx',dy')e^{2v'(z')}(\cosh(r_2-r_1)\I+\sinh(r_2-r_1)e^{-2v(z')}[h'_{ij}(z',0)])^{2}(dx',dy')^T \\
   &=& (dx',dy')e^{2v'(z')}\begin{pmatrix}
                 \alpha+a\beta & b\beta \\
                      b\beta & \alpha+c\beta
              \end{pmatrix}^2(dx',dy')^T\\
                        &=:&E'dx^2+2F'dxdy+G'dy^2.
             \end{eqnarray*}
            Here we write $\alpha = \cosh(r_2-r_1)$ and $\beta=\sinh(r_2-r_1)$, and 
\begin{eqnarray*}
E' & = &e^{2v'(z')} (\alpha^2+2a\alpha\beta+\beta^{2}(a^2+b^2))\\
 F' &=&e^{2v'(z')} (2b\alpha\beta + b\beta^2(a+c)) \\
 G'&=& e^{2v'(z')} (\alpha^2+2c\alpha\beta+\beta^{2}(c^2+b^2)).
\end{eqnarray*}

Now the metric $E'dx^2+2F'dxdy+G'dy^2$ can be written in the form $w'(z')|dz'+\mu(z')d\bar{z'}|^2$. 
It is easy to verify
\begin{equation}
E'G'-F'^2 = e^{4v'(z')}(\alpha + \lambda_1 \beta)^2(\alpha + \lambda_2\beta)^2,
\end{equation}
and 
\begin{equation}
E'+G'+2{\sqrt{E'G'-F'^2}} = e^{2v'(z')}(2\alpha + (\lambda_1+\lambda_2)\beta)^2,
\end{equation}
and
\begin{equation}
E'-G'+2\sqrt{-1}F'= e^{2v'(z')}\beta(2\alpha + (\lambda_1+\lambda_2)\beta)(a-c+2b\sqrt{-1}).
\end{equation}
Therefore, applying $(3.2)$, we obtain
\begin{eqnarray}
 \mu(z') & = & \frac{E'-G'+2{\sqrt{-1}}F'}{E'+G'+2{\sqrt{E'G'-F'^2}}} \nonumber \\
 &=& {\frac{\beta(a-c+2\sqrt{-1}b)}{2\alpha+(\lambda_1+\lambda_2)\beta}},
\end{eqnarray}
 and 
\begin{eqnarray}
 |\mu(z')| &=& {\frac{\beta|\lambda_2-\lambda_1|}{|2\alpha+(\lambda_1+\lambda_2)\beta|}} \nonumber \\
 &=& {\frac{|\lambda_2-\lambda_1|}{|\lambda_1+\lambda_2 + 2\coth(r_2-r_1)|}}.
\end{eqnarray}

On the other hand, as in the formula (\ref{principal curvature}), the {\pc}s 
$\lambda_1(z,r_1),\lambda_2(z,r_1)$ on $S(r_1)$ can be expressed in terms of the {\pc}s of the {\ms} 
$S$: $\pm\lambda(z)$. Thus, we find
\begin{equation}
|\mu(z')|=\frac{\lambda(z)|\tanh(r_2)-\tanh(r_1)|}{1-\lambda^2(z)\tanh(r_2)\tanh(r_1)}.
\end{equation}
Since $|\mu(z')|$ is an increasing function of $\lambda$ for fixed $r_1$ and $r_2$, we now find  
$$|\mu(z')|\leq\frac{\lambda_0|\tanh(r_2)-\tanh(r_1)|}{1-\lambda_0^2\tanh(r_2)\tanh(r_1)}$$ 
where $\lambda_0$ again is the maximum of {\pc}s on $S$.

It is now easy to see that
\begin{eqnarray*}
\frac{\lambda_0|\tanh(r_2)-\tanh(r_1)|}{1-\lambda_0^2\tanh(r_2)\tanh(r_1)}&<&\frac{|\tanh(r_2)-\tanh(r_1)|}{1-\tanh(r_2)\tanh(r_1)}\\
&=&|\tanh(r_2-r_1)|<1,
\end{eqnarray*}
and then $u$ is $\frac{\lambda_0|\tanh(r_2)-\tanh(r_1)|}{1-\lambda_0^2\tanh(r_2)\tanh(r_1)}-$quasiconformal.
\end{proof}
The Theorem 1.2 follows easily from the Lemma 3.3 and the Corollary 1.3 is obtained by triangle inequality or taking $r_1$ 
and $r_2$ to $-\infty$ and $\infty$.

\section{{\WP} Potential}

In this section, we prove the Theorem 1.4: the induced area functional of {\ms}s in the {\qf} space is a potential at the Fuchsian locus 
for the {\WP} metric on {\TS}. It is natural to consider the {\hym}s on a surface $\Sigma$ rather than the {\cs}s in the geometry of the 
{\WP} metric. A key fact for our theorem is that the {\sff} of a {\ms} is the real part of a {\hqd}.

Let  $g_0|dz|^2$ be the {\hym} on $(\Sigma,\sigma)$, where $\sigma$ is a {\cs} on $\Sigma$. By the Uniformization Theorem, the set of 
{\cs}s and the set of {\hym}s are of one-to-one correspondence. The hyperbolic surface $\Sigma$ with metric $g_0|dz|^2$ can be immersed into the 
Fuchsian manifold $\Sigma \times {\R}$ as a (totally geodesic) {\ms} with the vanishing {\sff}. 
\begin{lem}(\cite{U83})
There is an open neighborhood $U$ of $0 \in Q(\sigma)$ such that for any $\alpha dz^2 \in U$, there exists a {\ms} $\Sigma(\alpha dz^2)$, 
immersed in some {\qf} manifold such that the induced metric on $\Sigma(\alpha dz^2)$ is $e^{2u}g_0 |dz|^2$ and the {\sff} of the 
immersion is the real part of $\alpha dz^2$.
\end{lem}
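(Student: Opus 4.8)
The plan is to recast the statement as the classical Gauss--Codazzi existence problem for immersed surfaces in $\H^3$ and then solve it by a perturbation argument off the Fuchsian locus, following \cite{U83}. Lifting to the universal cover, an immersion $\tilde\Sigma\to\H^3$ with induced metric $e^{2u}g_0|dz|^2$ and prescribed {\sff} exists and is unique up to an isometry of $\H^3$ precisely when the pair (metric, {\sff}) satisfies the Gauss and Codazzi equations; this is the fundamental theorem of surface theory. Since the ambient curvature is $-1$ and we prescribe the {\sff} to be $\mathrm{Re}(\alpha\,dz^2)$ --- which is trace-free with respect to any conformal metric --- the resulting immersion is automatically minimal, and the Codazzi equation reduces exactly to the requirement that $\alpha$ be holomorphic, which holds by hypothesis. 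Thus the only equation left to solve is the Gauss equation, a single scalar semilinear elliptic PDE for the conformal factor $u$.

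First I would record this Gauss equation. Writing the {\pc}s of such a surface as $\pm\lambda$ with $\lambda = \|\alpha\|e^{-2u}$, where $\|\alpha\|^2 = |\alpha|^2/g_0^2$ is the pointwise norm taken with respect to $g_0$, the minimal-surface Gauss equation $K = -1 - \lambda^2$ combined with the fact that $g_0|dz|^2$ has curvature $-1$ becomes, after dividing through by $g_0$,
\begin{equation*}
\Delta_{g_0}\,u = e^{2u} - 1 + \|\alpha\|^2 e^{-2u},
\end{equation*}
where $\Delta_{g_0}$ is the Laplacian of the {\hym} $g_0$. When $\alpha = 0$ the unique solution is $u\equiv 0$, corresponding to the totally geodesic surface in the Fuchsian manifold $\Sigma\times\R$.

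Next I would solve this equation for small $\alpha$ by the implicit function theorem. Define
$F(u,\alpha) = \Delta_{g_0}u - e^{2u} + 1 - \|\alpha\|^2 e^{-2u}$
as a map $F:C^{2,\gamma}(\Sigma)\times Q(\sigma)\to C^{0,\gamma}(\Sigma)$ with $F(0,0)=0$. The partial derivative in $u$ at $(0,0)$ is the linear operator $\psi\mapsto \Delta_{g_0}\psi - 2\psi$. The crucial point is that this operator is invertible: testing $\Delta_{g_0}\psi - 2\psi = 0$ against $\psi$ and integrating by parts gives $\int_\Sigma(|\nabla\psi|^2 + 2\psi^2)\,dA_{g_0} = 0$, forcing $\psi\equiv 0$, and by elliptic theory the operator is then an isomorphism. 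The implicit function theorem therefore produces an open neighborhood $U$ of $0\in Q(\sigma)$ and a smooth family $u=u(\alpha)$ of solutions, unique near $u\equiv 0$.

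Finally I would globalize. For each $\alpha\in U$ the solution $u(\alpha)$ together with $\mathrm{Re}(\alpha\,dz^2)$ satisfies Gauss--Codazzi, so the fundamental theorem of surface theory yields an immersion of $\tilde\Sigma$ that, by uniqueness, is equivariant under a holonomy representation $\rho_\alpha:\pi_1(\Sigma)\to PSL(2,\C)$ depending continuously on $\alpha$, with $\rho_0$ the original Fuchsian representation. Shrinking $U$ if necessary, the {\pc}s $\pm\lambda = \pm\|\alpha\|e^{-2u}$ lie in $(-1,1)$, so the surface is {\af}; the normal flow then foliates the quotient $\H^3/\rho_\alpha(\pi_1(\Sigma))$ and identifies it as a {\qf} manifold, exactly as in Theorem \ref{path}. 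This last step is where I expect the main difficulty to sit: the PDE is a routine application of the implicit function theorem once the invertibility of $\Delta_{g_0}-2$ is noted, but upgrading the equivariant local immersion to a genuine {\qf} $3$-manifold --- that is, ensuring $\rho_\alpha$ remains discrete and {\qf} --- requires the openness of {\qf} space (equivalently the {\af} bound on the {\pc}s), and this geometric input must be supplied beyond the analysis.
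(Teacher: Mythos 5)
Your proof is correct and follows essentially the same route as the source the paper cites for this lemma (Uhlenbeck \cite{U83}); the paper itself gives no argument beyond the citation, but the Gauss equation you derive is exactly the equation $\Delta_{g_0}u+1-e^{2u}-\frac{|\alpha|^2}{g_0^2}e^{-2u}=0$ used in Section 4, and your linearized operator $\Delta_{g_0}-2$ is the same one whose invertibility underlies the computations there. The one step you rightly flag as nontrivial---upgrading the equivariant immersion to a surface in a genuine quasi-Fuchsian manifold via the principal-curvature bound and the resulting equidistant foliation---is precisely the geometric half of Uhlenbeck's argument, so your reconstruction is faithful to the proof the paper is invoking.
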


\begin{proof}[Proof of Theorem \ref{wp}] We start with the classical Gauss equation. Using the fact that curvature of the {\hym} 
$g_0|dz|^2$ is $-1$, one can rewrite Gauss equation as the following quasi-linear elliptic equation:
\begin{equation}
\Delta_{g_0} u+1-e^{2u}-\frac{|\alpha|^2}{g_0^2}e^{-2u}=0,
\end{equation}
where $\Delta_{g_0}$ is the Laplace operator on the hyperbolic surface $\Sigma$ with metric $g_0|dz|^2$.

Now consider a one-parameter family of Gauss equations for metrics on a {\ms} in the 
conformal class of the {\hym} $g_0|dz|^2$, and {\sff} $\Re(t\alpha dz^2)$ for a fixed {\hqd} $\alpha dz^2 \in Q(\sigma)$.

For each small $t$, the solution $u^t$ to 
\begin{equation}\label{u1}
\Delta_{g_0} u^t+1-e^{2u^t}-\frac{|t\alpha|^2}{g_0^2}e^{-2u^t}=0.
\end{equation}
gives rise to an immersed {\ms} $\Sigma(t\alpha)$ with {\pc}s in the range of $(-1,1)$.

At $t=0$, the maximum principle implies the solution to
\begin{equation*}
\Delta_{g_0} u^0 = -1+ e^{2u^0}
\end{equation*}
is exactly $u^0 = 0$, hence $u^0=0$ corresponds to the totally geodesic case.

The (induced) area functional of $\Sigma(t\alpha dz^2)$ is given by
\begin{equation*}
|\Sigma(t\alpha dz^2)|=\int_\Sigma e^{2u^t}dA_{g_0}.
\end{equation*}

We denote $\dot{u} =  \frac{\partial u}{\partial t}|_{t=0}$. Differentiating (\ref{u1}) respect to $t$ and evaluate at $t=0$, we get 
\begin{equation*}
\Delta_{g_0} \dot{u}-2\dot{u}=0.
\end{equation*}

By the maximum principle, we have 
\begin{equation}\label{udot}
\dot{u} = 0.
\end{equation} 
The first variation of the area is then 
\begin{equation*}
\frac{\partial}{\partial t}|_{t=0}(|\Sigma(t\alpha dz^2)|) =\int_\Sigma 2e^{2u^0} \dot{u} dA_{g_0} = 0.
\end{equation*}
Hence the area functional is critical at the Fuchsian locus.

We now consider the second variation, let $t\alpha dz^2 +s\beta dz^2\in U\subset Q(\sigma)$ with $t,s\in {\C}$ and 
$|t|, |s|$ small enough. The area of $\Sigma(t\alpha dz^2+s\beta dz^2)$ is 
\begin{equation*}
|\Sigma(t\alpha dz^2+s\beta dz^2)|=\int_\Sigma e^{2u(t,s)} dA_{g_0}.
\end{equation*}
where $u(t,s)$ satisfies
\begin{equation}\label{u2}
\Delta_{g_0} u(t,s)+1-e^{2u(t,s)}-\frac{|t\alpha+s\beta|^2}{g_0^2}e^{-2u(t,s)}=0,
\end{equation}
with $u(0,0) = 0$.

Differentiating (\ref{u2}) respect to $t$ and $\overline{s}$ and evaluating at $t=s=0$, we obtain 
\begin{equation}\label{u3}
\Delta_{g_0} \frac{\partial^2 u}{\partial\overline{s} \partial t}|_{t=s=0}-
2\frac{\partial^2 u}{\partial\overline{s}\partial t}|_{t=s=0} - {\frac{\alpha\overline{\beta}}{g_0^2}}=0.
\end{equation}
Here we also used \eqref{udot}. Integrating the two sides of (\ref{u3}), we find
\begin{eqnarray*}
0&=&\int_\Sigma \Delta_{g_0} \frac{\partial^2 u}{\partial\overline{s} \partial t}|_{t=s=0} dA_{g_0} \\
&=&\int_\Sigma  2\frac{\partial^2 u}{\partial\overline{s}\partial t}|_{t=s=0}  dA_{g_0}
+ \int_\Sigma  {\frac{\alpha\overline{\beta}}{g_0^2}} dA_{g_0}. 
\end{eqnarray*}
Therefore 
\begin{eqnarray*}
&&\frac{\partial^2 }{\partial\overline{s} \partial t}|\Sigma(t\alpha dz^2+s\beta dz^2)| |_{t=s=0}\\
&=&\int_\Sigma   4 \frac{\partial u }{\partial\overline{s}} |_{t=s=0} \frac{\partial u }{\partial t} |_{t=s=0}dA_{g_0}
+  \int_\Sigma   2\frac{\partial^2 u}{\partial\overline{s}\partial t}|_{t=s=0}  dA_{g_0}\\
&=&- \int_\Sigma   {\frac{\alpha\overline{\beta}}{g_0^2}} dA_{g_0},
\end{eqnarray*}
here again we used \eqref{udot}. Now the second variation of the area functional is a constant multiple of the 
{\WP} pairing between {\hqd}s $\alpha dz^2$ and $\beta dz^2$ by $(2.4)$.
\end{proof}
\bibliographystyle{amsalpha}
\bibliography{ref}
\end{document}